\documentclass[11pt]{article}

\usepackage[T1]{fontenc}
\usepackage{lmodern}
\usepackage{microtype}
\usepackage{amsmath,amssymb,mathtools,amsthm}
\usepackage{geometry}
\usepackage{enumitem}
\usepackage{aliascnt}
\usepackage{xcolor}
\usepackage[colorlinks=true,linkcolor=blue!55!black,citecolor=blue!55!black,urlcolor=blue!55!black]{hyperref}
\usepackage[nameinlink,capitalise]{cleveref}

\allowdisplaybreaks

\newtheorem{theorem}{Theorem}
\newaliascnt{corollary}{theorem}
\newtheorem{corollary}[corollary]{Corollary}
\aliascntresetthe{corollary}
\newaliascnt{lemma}{theorem}
\newtheorem{lemma}[lemma]{Lemma}
\aliascntresetthe{lemma}
\newaliascnt{proposition}{theorem}
\newtheorem{proposition}[proposition]{Proposition}
\aliascntresetthe{proposition}
\theoremstyle{definition}
\newaliascnt{definition}{theorem}

\aliascntresetthe{definition}
\theoremstyle{remark}
\newaliascnt{remark}{theorem}

\aliascntresetthe{remark}

\crefname{theorem}{Theorem}{Theorems}
\Crefname{theorem}{Theorem}{Theorems}
\crefname{corollary}{Corollary}{Corollaries}
\Crefname{corollary}{Corollary}{Corollaries}
\crefname{lemma}{Lemma}{Lemmas}
\Crefname{lemma}{Lemma}{Lemmas}
\crefname{proposition}{Proposition}{Propositions}
\Crefname{proposition}{Proposition}{Propositions}
\crefname{definition}{Definition}{Definitions}
\Crefname{definition}{Definition}{Definitions}
\crefname{remark}{Remark}{Remarks}
\Crefname{remark}{Remark}{Remarks}

\DeclareMathOperator{\rk}{r}
\DeclareMathOperator{\cl}{cl}
\DeclareMathOperator{\tr}{tr}
\newcommand{\T}{\mathcal{T}}
\newcommand{\Q}{\mathcal{Q}}

\title{Piercing spanned lines and hyperplanes}
\author{%
    Ben Lund\thanks{Institute of Mathematics and Interdisciplinary Sciences,
Xidian University, Xi'an, China.
Email: \href{mailto:lund.ben@gmail.com}{\texttt{lund.ben@gmail.com}}.}}
\date{}
\hypersetup{pdfauthor={Ben Lund}}

\begin{document}
\maketitle

\begin{abstract}
Motzkin, Gr\"unbaum, and Erd\H{o}s and Purdy asked how many
noncollinear blue points can coexist with $t$ red points in the real
plane if every blue-spanned line contains a red point. We prove that
there are at most $2.35t-3$ blue points, even over the complex numbers.
For the complex higher-dimensional problem of Campbell, Kroeker, and Lund,
we prove $b_k(t)\le1.40k^2t$. Colorings of cyclic Dowling
geometries give a matching lower bound in order of magnitude for
all $k\ge2$ and $t\ge3$. The planar proof uses frame normalization
and a second-moment estimate; the higher-dimensional proof combines
the degree bound of Kroeker and Lund with principal truncation.
\end{abstract}

\section{Introduction}

Let $B$ and $R$ be disjoint finite sets of blue and red points in the
real plane, with $B$ noncollinear. How large can $|B|$ be in terms of
$|R|$ if every line spanned by two blue points contains a red point?
This problem was considered by Motzkin~\cite{Motzkin1951,Motzkin1967},
Gr\"unbaum~\cite{Grunbaum1975}, and
Erd\H{o}s and Purdy~\cite{ErdosPurdy1978}. We study it over the
complex numbers and obtain bounds both for the planar problem and the higher-dimensional
generalization posed by Campbell--Kroeker--Lund~\cite{CKL}.

The planar problem is related to the Motzkin--Rabin theorem, which
states that every finite noncollinear set of red and blue points in
the real plane spans a monochromatic line; see
Motzkin~\cite{Motzkin1967} and Chakerian~\cite{Chakerian1970}.
Requiring every monochromatic line to be red leads to the
{biased colorings} studied by Motzkin and Gr\"unbaum.
Equivalently, the red points must pierce all lines spanned by the
blue points.

Write $n=|B|$ and $t=|R|$.
Motzkin~\cite{Motzkin1951} proved $n\le(t-1)^2$ over $\mathbb R$.
The weak Dirac theorem, proved independently by Beck~\cite{Beck1983}
and Szemer\'edi and Trotter~\cite{SzemerediTrotter1983}, gives a linear
bound: some blue point is incident to a positive proportion of $n$
blue-spanned lines, and these lines require distinct red points.
Pinchasi~\cite{Pinchasi2013} proved $n\le3t+1$ by an elementary
argument that avoids point--line incidence bounds.
Huicochea, Lea{\~n}os, and Rivera~\cite{HLR2022} stated a further
refinement of this bound.

Some additional restrictions permit stronger bounds. If the red
points are collinear, their supporting line is disjoint from $B$
and can be taken as the line at infinity. The problem then becomes
one of counting directions determined by $B$, and Ungar's
theorem~\cite{Ungar1982} gives $t\ge2\lfloor n/2\rfloor$.
Without this restriction, Gr\"unbaum~\cite{Grunbaum1975,Grunbaum1999}
constructed real configurations with $n=t+4$ for infinitely many
$t$, as well as a configuration with $n=16$ and $t=10$.

Over $\mathbb C$, the degree consequence of
Langer's theorem~\cite[Proposition~11.3.1]{Langer2003}
obtained by Han~\cite{Han2017WeakDirac} implies $n\le3(t-1)$:
some blue point lies on at least $(n+3)/3$ blue-spanned lines,
which again require distinct red points. We improve this bound to
$n\le(8-4\sqrt2)t-3$.
In the other direction, the construction in \cref{sec:construction}
gives complex planar configurations with $n=2t-3$ for every $t\ge3$.

The higher-dimensional problem introduced by Campbell, Kroeker,
and Lund~\cite{CKL} arose from their study of the average number of
points in a spanned hyperplane. They showed that, in fixed dimension,
a configuration with sufficiently large average hyperplane size
has every spanned hyperplane containing one of a bounded collection
of spanned lines. Moreover, the point set is the union of a \emph{degenerate}
set contained in a union of flats whose total dimension is less than
the ambient dimension with an exceptional set of bounded size. Their colored
extremal function gives a sharp bound on the size of this exceptional set.
The connection is illustrated by replacing each red point with
many points on a line in a new independent direction: the piercing
condition forces every spanned hyperplane of the resulting
configuration to contain one of these lines.

Simply replacing lines by hyperplanes in the planar problem does
not give a finite extremal function, even when the blue points span
the ambient space. In three dimensions, place arbitrarily many blue
points and one red point on each of two skew lines. Every plane
spanned by the configuration contains one of the two lines, and
hence a red point. Thus two red points suffice for arbitrarily many
blue points. The additional condition (C3) below excludes this
example and supplies the complements needed for contraction
arguments.

For their higher-dimensional formulation, Campbell, Kroeker, and
Lund~\cite[Theorem~1.3]{CKL} proved an upper bound of $O_k(t)$;
their argument also applies over $\mathbb C$. We give an explicit
$O(k^2t)$ upper bound over $\mathbb C$ and constructions showing that
this is the correct order of magnitude, uniformly for all $k\ge2$
and $t\ge3$.

We use matroid terminology to describe finite complex vector
configurations. A complex-representable matroid $M$ records the
linear dependencies among vectors $(v_e)_{e\in E}$, where $E$ is its
\emph{ground set}. For $X\subseteq E$, its \emph{rank}
$\rk_M(X)$ is the dimension of $\operatorname{span}\{v_e:e\in X\}$;
the rank of $M$ is $\rk(M)=\rk_M(E)$. A \emph{basis} of $X$ is a
subset whose vectors form a basis of this span, and $X$ \emph{spans}
$M$ if $\rk_M(X)=\rk(M)$. A zero vector represents a \emph{loop},
and nonzero scalar multiples represent \emph{parallel} elements.
The matroid is \emph{simple} if it has neither loops nor parallel
elements, so a simple rank-$r$ complex-representable matroid can be
viewed as a finite spanning set of distinct points in
$\mathbb{CP}^{r-1}$.

The \emph{closure} $\cl_M(X)$ consists of all $e\in E$ whose vectors
lie in the span of the vectors indexed by $X$. A \emph{flat} is a
set equal to its closure. A \emph{point}, \emph{line}, and
\emph{hyperplane} mean a flat of rank one, two, and $\rk(M)-1$,
respectively. Thus a flat contains every element of the configuration
in its span, rather than every point of the ambient projective space.
The \emph{restriction} $M|X$ retains only the vectors indexed by $X$.
Sets $X,Y$ are \emph{skew} if their vector spans intersect only in
zero, equivalently if
$\rk_M(X\cup Y)=\rk_M(X)+\rk_M(Y)$.

A coloring of a simple
rank-$r$ matroid $M$ is a partition $(R,B)$ of its ground set into
red and blue sets. Following Campbell, Kroeker, and Lund~\cite{CKL},
we call the coloring \emph{admissible} if
\begin{enumerate}[label=(C\arabic*),leftmargin=*,itemsep=2pt,topsep=5pt]
    \item $\rk_M(B)\ge2$;
    \item every hyperplane of $M$ contains a red point;
    \item for every line $L$ of $M|B$, there is a rank-$(r-2)$
    flat $G$ of $M$ skew to $L$ such that $G\cap R=\varnothing$.
\end{enumerate}
Conditions (C1) and (C3) imply that $B$ spans $M$: a blue line and
its red-free skew complement both lie in $B$ and together have rank
$r$. A \emph{blue flat} means a flat of $M|B$.

Condition (C3) has both a geometric and an inductive role.
In rank three, it says that every blue-spanned line has a blue
point outside it, so it is exactly the noncollinearity condition
from the planar problem. In the two-skew-line example, it fails
for the blue points on either original line: the only line of the
configuration skew to that line is the other original line, which
contains a red point.

More generally, (C3) supplies a red-free complement to every blue
flat $F$ of rank $q\ge2$: there is a flat $P$ of rank $r-q$ skew
to $F$ and containing no red point; see \cref{lem:complement}.
Contracting $P$ means taking the quotient by its vector span.
This reduces the ambient rank to $q$ while preserving all ranks
within $F$. No red point becomes a loop, and every hyperplane of
the quotient contains a red point, since it lifts to a hyperplane
of $M$ containing $P$. Thus (C3) lets us study a blue flat in its
own rank while retaining the hyperplane-piercing condition.
The resulting coloring need not satisfy (C3); principal
truncations repair such failures in the inductive argument.

For $\mathbb K\in\{\mathbb R,\mathbb C\}$ and integers $k,t\ge1$,
let $b_k^{\mathbb K}(t)$ be the maximum number of blue points in an
admissible $\mathbb K$-representable matroid of rank $k+1$ with $t$
red points, with value zero when no such matroid exists. Then
\[
    b_k^{\mathbb R}(t)\le b_k^{\mathbb C}(t).
\]
We henceforth write $b_k(t)=b_k^{\mathbb C}(t)$ and work over
$\mathbb C$; every upper bound also applies over $\mathbb R$.
Since a finite projective configuration can be placed in an affine
chart, $b_2(t)$ is the complex planar extremal function.
By \cref{lem:three-red}, both extremal functions vanish for $t=1,2$.

\begin{theorem}\label{thm:main}
For all integers $k\ge2$ and $t\ge3$,
\begin{align*}
    b_k(t)
    &\le\frac{k+1}{3}\bigl((7-2\sqrt2)k-6\bigr)t
        -\frac{(k+1)(3k-4)}2\\
    &\le\frac{7-2\sqrt2}{3}k^2t.
\end{align*}
\end{theorem}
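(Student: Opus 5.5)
The plan is to induct on $k$, with the planar theorem as the base case. At $k=2$ the right-hand side equals $(8-4\sqrt2)t-3$, the planar bound over $\mathbb C$ established earlier, so nothing further is needed there. Write $c=7-2\sqrt2$, $f(k)=\tfrac{k+1}{3}(ck-6)$ and $g(k)=\tfrac{(k+1)(3k-4)}{2}$. A direct computation gives
\[
f(k)-f(k-1)=\tfrac{2ck-6}{3},\qquad g(k)-g(k-1)=3k-4 .
\]
So for $k\ge3$ it suffices to show
\[
b_k(t)\le b_{k-1}(t)+\tfrac{2ck-6}{3}\,t-(3k-4).
\]
Summing this recurrence from $k=3$ down to the planar base gives the first inequality. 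The second inequality follows because $(k+1)(ck-6)/3\le ck^2/3$ exactly when $ck-6k-6\le0$, and this holds since $c<6$. The constant term can simply be dropped.

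For the inductive step, take an admissible rank-$(k+1)$ coloring $(R,B)$ with $|B|=n$ and $|R|=t$.

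\textbf{Step 1: find a suitable red point.} Apply the degree bound of Kroeker and Lund to the blue configuration. Since $B$ spans $M$ and every hyperplane must meet $R$, this should produce a red point $x$ whose contraction $M/x$ loses few blue points to parallel classes. Concretely, the blue lines through $x$ should carry at most about $\tfrac{2ck-6}{3}t$ "excess" blue points, up to the additive correction $3k-4$.

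\textbf{Step 2: pass to rank $k$.} In $M/x$ every hyperplane lifts to a hyperplane of $M$ through $x$, so it contains a red point other than $x$ (or $x$ itself, which is now a loop and is discarded). The red count therefore drops to at most $t-1\le t$.

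\textbf{Step 3: repair (C3).} Condition (C3) can fail in $M/x$. To fix this, use \cref{lem:complement} to lift each blue line to a red-free skew complement in $M$. Where that complement passes through $x$, replace it by a principal truncation of the offending flat. This recovers an admissible rank-$k$ coloring whose blue set is the simplification of the blue image, possibly minus a bounded number of points. The induction hypothesis then bounds that set by $b_{k-1}(t)$.

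The main obstacle is controlling the blue points that disappear under contraction and truncation: the collinearities with $x$, together with the points removed to restore (C3). These must be charged linearly to $t$ with the coefficient $(2ck-6)/3$.

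My first attempt is a double count. Each blue line through $x$ that contains $m\ge2$ blue points gives at least $m-1$ blue-spanned hyperplanes. These are pierced by distinct red points in a suitable rank-$k$ section. The Kroeker–Lund bound on the maximal degree would then give the linear loss. The constant $c=7-2\sqrt2$ should arise by running the planar second-moment estimate on the rank-$3$ restriction through $x$ and a worst-case blue line.
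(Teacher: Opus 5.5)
\textbf{There is a genuine gap: the inductive step fails.} Your outer reduction is fine. At $k=2$ the bound is exactly \cref{thm:planar}, and $(k+1)\bigl((7-2\sqrt2)k-6\bigr)\le(7-2\sqrt2)k^2$ because $7-2\sqrt2<6$. One slip: $g(k)-g(k-1)=3k-2$, not $3k-4$.

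\textbf{Step~2 is false.} Contracting a red point $x$ does not preserve (C2). A hyperplane of $M/x$ lifts to a hyperplane of $M$ through $x$. There (C2) is already satisfied by $x$ itself, so once $x$ becomes a loop, nothing need pierce the hyperplane. For example, in the coloring of $Q_4(t-2)$ from \cref{prop:dowling-lower}, the hyperplane spanned by $e_1,e_3,e_4$ has $[e_1]$ as its only red point. Contracting $x=[e_1]$ therefore leaves a red-free hyperplane. This is why the paper contracts a \emph{blue} point $e$. A blue point is also what the Kroeker--Lund degree bound actually produces when applied to the blue set (\cref{lem:dirac}).

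\textbf{The recurrence is multiplicative, not additive.} The degree bound only guarantees that about a $(q-2)/q$ fraction of the blue points survive. The loss is therefore proportional to $|B|$, not $O(kt)$ a priori. The natural recurrence is $|B|\le\frac{q}{q-2}(n_0+t-1)$, and the quadratic growth comes from the product of the factors $j/(j-2)$.

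\textbf{Step~3 is the more serious gap, and it is where the real work lies.} Each principal truncation lowers the rank by one. So you do not recover a rank-$k$ admissible configuration, but one of some rank $s=q-1-D$, or a degenerate one. The truncations also collapse entire blue lines, and more, into parallel classes, so the number of lost blue points is not bounded by a constant. The paper controls these merged points with ingredients your plan lacks:
\begin{enumerate}
\item A bound $U_q(t)$ for every rank-$q$ \emph{blue flat}, proved simultaneously with the ambient bound. This is where \cref{lem:complement} is used: contract a red-free complement to view the flat in its own rank.
\item The rank identity of \cref{lem:budget}, $\sum_Q c_Q=D$.
\item The observation that each merged class, lifted together with $e$, has rank at most $c_Q+2\le q-1$, and so contains at most $H_t(c_Q)$ blue points.
\item Superadditivity of $H_t$, which combines these class bounds with the terminal bound $V_s(t)\le H_t(s-2)$ to give $n_0\le H_t(q-3)=U_{q-1}(t)-1$.
\end{enumerate}
The statement is then just the ambient bound $V_{k+1}(t)$ of \cref{thm:flats}, expanded.

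Two further points. The induction runs on the explicit bounds $U_q,V_q$, which increase in $t$, rather than on $b_{k-1}(t)$; otherwise you would need monotonicity of $b_{k-1}$ in $t$. And $7-2\sqrt2$ does not come from a new second-moment estimate. It is simply the planar constant $8-4\sqrt2$ propagated through this recurrence.
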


For $k=2$, the bound is
\[
    b_2(t)\le(8-4\sqrt2)t-3\qquad(t\ge3).
\]
In \cref{sec:construction}, we color cyclic Dowling geometries to obtain
\[
    b_k(t)\ge
    (t-2)\left(\binom{k+1}{2}-1\right)+k-1
    \qquad(k\ge2,\ t\ge3).
\]
Consequently, $b_k(t)=\Theta(k^2t)$ with absolute implied constants.

\subsection*{Proof sketch.}
We first prove the planar bound by comparing local and global
second moments of normalized point vectors. For higher dimensions,
we induct on rank. The degree bound of Kroeker and
Lund~\cite[Theorem~1.3]{KLFrame} lets us contract a blue point while
retaining many blue points. Principal truncations restore
admissibility, and a rank identity controls the points merged
during these truncations. Applying induction to the corresponding
blue flats yields a recurrence with quadratic growth in the rank.

\subsection*{Acknowledgments}

The author thanks Tao Zhang and Tuan Tran for productive discussions
at the outset of this project. OpenAI ChatGPT-6 Astra was used to
help complete the proofs and assist with writing. The author takes
full responsibility for the correctness of the results and the
content of the paper.

\section{The planar bound}\label{sec:planar}

The proof normalizes the blue point vectors so that their frame
operator is nearly a scalar matrix. On each blue-spanned line, the
smallest nonzero eigenvalue bounds the contribution of a red point
on that line. Summing these bounds counts each blue--red pair at
most once, while the total eigenvalue error is controlled by
counting blue pairs. Cauchy--Schwarz then reduces the argument to
a one-variable inequality.

The combination of frame normalization and second-moment estimates appears in earlier work of Dvir, Saraf, and Wigderson on three-query locally correctable codes \cite{DSW3LCC}, as well as in work of Kroeker and Lund \cite{KLFrame} and de Borbon and Panov \cite{deBorbonPanov2026Hirzebruch}.
Here we adapt this method to the colored setting by using local eigenvalue bounds to control the contribution of a red point on each blue-spanned line.

For a matrix $U$, write $U^*$ for its conjugate transpose; for
Hermitian matrices, $S\preceq T$ means that $T-S$ is positive
semidefinite. The following frame-normalization lemma is a
consequence of the complex version of Barthe's radial isotropy
theorem~\cite{Barthe}; see Franks~\cite[Section~6.3, Theorems~75 and~77]{Franks}
for the complex formulation. Kroeker and Lund~\cite[Appendix~A]{KLFrame}
give a self-contained proof of this form.

\begin{lemma}[{Kroeker--Lund~\cite[Lemma~2.1]{KLFrame}}]\label{lem:frame}
Suppose $v_1,\ldots,v_n$ are nonzero vectors spanning $\mathbb C^D$,
and let $T\ge n/D$. Assume that
\[
    |\{i:v_i\in W\}|\le T\dim W
    \qquad\text{for every linear subspace }W\subseteq\mathbb C^D.
\]
The vectors need not be distinct or pairwise nonparallel; the count
is by index. Given $\eta>0$, there is an invertible linear map $A$
for which
\[
    u_i=\frac{Av_i}{\|Av_i\|},\qquad
    \sum_{i=1}^n u_iu_i^*\preceq(T+\eta)I_D.
\]
\end{lemma}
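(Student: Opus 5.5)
The plan is to obtain the statement from the complex form of Barthe's radial isotropy theorem, through a perturbation and limiting argument. First I reduce to the case where the vectors are in general position. If no proper subspace is \emph{tight}, meaning $|\{i:v_i\in W\}|<T\dim W$ for every proper nonzero $W$, and in addition $T=n/D$, then $(1,\dots,1)/T$ lies in the relative interior of the basis polytope of the configuration. In that case Barthe's theorem (Franks, Theorems~75 and~77) gives an invertible $A$ for which $\sum_i u_iu_i^*=(n/D)I_D$ exactly. If $T>n/D$, I add $m$ auxiliary generic vectors so that the total count becomes $TD$ (after rounding, using the slack $\eta$). For generic additions every subspace $W$ then satisfies the count condition with $T$. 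I apply Barthe to the enlarged family, and then drop the auxiliary terms. Since they are positive semidefinite, removing them only decreases $\sum u_iu_i^*$, which leaves $\preceq TI_D$.

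The real difficulty is tight subspaces, where the count equals $T\dim W$. Here exact radial isotropy may require degenerate limits, with $A$ blowing up. I would handle this by approximation. Replace each $v_i$ by $v_i+\epsilon w_i$ with $w_i$ generic, so that the perturbed vectors lie in general position; equivalently, use the enlarged-weight version above with $T$ replaced by $T+\eta/2$. In either form the count inequality becomes strict for every proper subspace. Barthe then gives $A_\epsilon$ with $\sum_i u_i^{(\epsilon)}u_i^{(\epsilon)*}\preceq (T+\eta/2)I_D$. The catch is that this $A_\epsilon$ is adapted to the perturbed vectors, whereas the lemma needs it applied to the original $v_i$. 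So I instead use the operator-scaling continuity formulation. The function $A\mapsto \lambda_{\max}\bigl(\sum_i Av_iv_i^*A^*/\|Av_i\|^2\bigr)$ has infimum at most $T$ over invertible $A$. This follows from Franks' capacity characterization: the infimum of the normalized frame operator's deviation from scalar is zero exactly when the weight vector lies in the closed basis polytope, and the count condition is precisely membership in the closed polytope with weights scaled by $1/T$. Because the statement allows the additive $\eta$ and does not ask for the infimum to be attained, this approximate version is exactly what is needed.

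The main obstacle is the careful derivation of the closed-polytope, approximate statement, including the handling of $T>n/D$ and of non-distinct or parallel vectors. Parallel vectors are harmless, since the count is by index and they all normalize to the same $u$. The fallback for the general case is the self-contained argument in Kroeker--Lund, Appendix~A. That argument runs an alternating scaling iteration, $A\leftarrow (\sum u_iu_i^*)^{-1/2}A$. Its potential $\log\det$ is bounded below under the count hypothesis via a Brascamp--Lieb-type inequality, and the iteration therefore reaches $\lambda_{\max}\le T+\eta$ in finitely many steps.
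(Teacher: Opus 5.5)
Your plan matches how the paper justifies this lemma, which it cites rather than reproves: deduce it from the complex Barthe/Franks radial-isotropy theorem after padding with generic auxiliary vectors, with Kroeker--Lund's Appendix~A as the self-contained alternative. Your weight-padding version in fact already completes the proof: put weight $1/(T+\eta/2)$ on each $v_i$ and real weights in $(0,1)$, totalling $D-n/(T+\eta/2)$, on generic auxiliary vectors; then genericity and $T<T+\eta/2$ make the weighted count of every proper nonzero subspace strictly smaller than its dimension, so the weight vector lies in the relative interior of the basis polytope, exact Barthe applies to the \emph{unperturbed} $v_i$, and dropping the auxiliary terms gives $\sum_i u_iu_i^*\preceq(T+\eta/2)I_D$ --- here the ``catch'' you raise concerns only the perturbation $v_i+\epsilon w_i$, real weights replace your rounding step (integer-count padding within slack $\eta$ can fail when $\eta D<1$), and the capacity detour is unnecessary, and as stated it needs this padding anyway, since $(1/T)\mathbf 1$ has total mass $n/T$ and lies in the basis polytope only when $T=n/D$.
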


\begin{theorem}\label{thm:planar}
Let \(B,R\subseteq\mathbb{CP}^2\) be disjoint finite sets, with \(B\)
noncollinear. If every line spanned by \(B\) meets \(R\), then
\[
    |B|\le(8-4\sqrt2)|R|-3.
\]
\end{theorem}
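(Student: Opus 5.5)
We plan to prove \cref{thm:planar} by following the strategy announced at the start of the section. Represent the $n=|B|$ blue points by vectors $v_1,\dots,v_n\in\mathbb C^3$. Put $T=\max\bigl(n/3,\ \text{(largest collinear blue set)}/2,\ 1\bigr)$, so that the subspace hypothesis of \cref{lem:frame} holds: a $1$-dimensional subspace contains one blue point, a $2$-dimensional subspace contains at most $\ell_{\max}$ blue points, and all of $\mathbb C^3$ contains $n\le 3T$.

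First we dispose of the case of a rich line. Suppose some blue line $L$ contains $\ell$ blue points with $\ell$ large. By noncollinearity there is a blue point $p\notin L$. The $\ell$ lines joining $p$ to the points of $L$ are distinct blue-spanned lines through $p$, so they need $\ell$ distinct red points, and none of these lies on $L$ except possibly one. This gives $t\gtrsim \ell$. Combined with the main estimate below, we may therefore assume that $T$ is essentially $n/3$, or else carry the parameter $T=\max(n/3,\ell/2)$ through the computation.

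Next apply \cref{lem:frame} to get unit vectors $u_i$ with $\sum_i u_iu_i^*\preceq (T+\eta)I$. Taking traces gives $\sum_i\|u_i\|^2=n\le 3(T+\eta)$, so the frame is nearly tight. We then take a global second moment:
\[
\sum_{i,j}|\langle u_i,u_j\rangle|^2=\operatorname{tr}\Bigl(\bigl(\textstyle\sum_i u_iu_i^*\bigr)^2\Bigr)\le (T+\eta)\,n .
\]
Here the diagonal contributes $n$, so the off-diagonal sum is at most $n(T-1+\eta)$.

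The local step works line by line. For each blue line $L$ with blue set $B_L$, consider the $2\times2$ frame operator $S_L=\sum_{i\in B_L}u_iu_i^*$ restricted to $\operatorname{span}L$. Its eigenvalues are $\lambda_1+\lambda_2=|B_L|$ and $\lambda_1\lambda_2=\tfrac12\sum_{i,j\in B_L}\bigl(1-|\langle u_i,u_j\rangle|^2\bigr)$. Let $r\in R$ be a red point on $L$ with unit vector $w$ (it also lies in $\operatorname{span} L$). Then $\sum_{i\in B_L}|\langle w,u_i\rangle|^2=w^*S_Lw\ge\lambda_{\min}(S_L)$.

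Now sum over all blue lines, each with a chosen red point $r_L$. Every blue point lies on exactly one blue line through $r$, since $r\notin B$. So each pair $(i,r)$ is counted at most once, and the left-hand side is at most
\[
\sum_{r\in R} w_r^*\Bigl(\sum_i u_iu_i^*\Bigr)w_r\le t(T+\eta).
\]
The right-hand side is $\sum_L\lambda_{\min}(S_L)$. We lower-bound it by writing $\lambda_{\min}=\tfrac{|B_L|}{2}-\sqrt{\tfrac{|B_L|^2}{4}-\lambda_1\lambda_2}$. The deficiency is governed by $\sum_{i\ne j\in B_L}|\langle u_i,u_j\rangle|^2$, whose total over lines is the global off-diagonal sum bounded above by about $n(T-1)$. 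Meanwhile $\sum_L\binom{|B_L|}{2}=\binom n2$.

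The main obstacle is the final optimization, which is also where the constant $8-4\sqrt2$ should emerge. The concave square-root loss must be handled by Cauchy--Schwarz over lines, weighting by $|B_L|$ or by pairs, to obtain an inequality of the form $t(T+\eta)\ge f(n,T)$. With $T\approx n/3$, this becomes a one-variable inequality in $x=n/t$, and solving it should give $x\le 8-4\sqrt2$ asymptotically. I would first try the bound $\lambda_{\min}\ge \lambda_1\lambda_2/|B_L|$, i.e.\ $\lambda_{\min}\ge\tfrac{1}{2|B_L|}\sum_{i,j}(1-|\langle u_i,u_j\rangle|^2)$. Then I would apply Cauchy--Schwarz to the sum over lines of the pair counts versus line sizes, using $\sum_L |B_L|(|B_L|-1)=n(n-1)$ and letting $\eta\to0$. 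The additive $-3$ will need care: the exact traces, the diagonal term $n$, and the rich-line case must be tracked without asymptotic slack, possibly with a small separate check for tiny $t$ (where $t\ge3$ by \cref{lem:three-red}).
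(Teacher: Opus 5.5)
\textbf{There is a genuine gap: the final optimization is missing, and the route you suggest for it fails.} Up to that point your proposal is the paper's argument. You normalize with \cref{lem:frame} and bound the Rayleigh quotient $w^*S_Lw\ge\lambda_{\min}(S_L)$ at one red point per blue line. Each blue--red pair is counted at most once, and you use the global second moment $\tr(S^2)\le n(T+\eta)$, splitting its off-diagonal part over lines by pair counting. You even write the exact eigenvalue $\lambda_{\min}=b_L/2-\sqrt{\varepsilon_L/2}$, where $b_L=|B_L|$ and $\varepsilon_L=\tr(S_L^2)-b_L^2/2\ge0$. But you stop exactly where the constant is decided.

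Your first try, $\lambda_{\min}\ge\lambda_1\lambda_2/b_L$, equals $b_L/4-\varepsilon_L/(2b_L)$. This loses a factor of two on balanced lines: when $\varepsilon_L=0$ the true value is $b_L/2$. Summing with $b_L\ge2$ and the bound $\sum_L\varepsilon_L\le(I-a)/2$ below gives only $\sum_L\lambda_{\min}\ge(I+a)/8\ge a/4$. That is $n\le4t-3$, weaker even than Han's $3(t-1)$. Equality holds throughout when every $\varepsilon_L=0$ and $I=a$, and nothing you use excludes this, so no extra bookkeeping along this route reaches $8-4\sqrt2$.

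\textbf{The missing step} is to keep the square root and combine your counting facts into one identity. Put $I=\sum_L b_L$, let $V$ be the number of blue lines, and set $a=n(n+3)/3$. Each ordered pair of distinct blue points lies on exactly one blue line, so $\sum_L b_L^2=n(n-1)+I$ and $\sum_L\tr(S_L^2)=\tr(S^2)+I-n$. Hence, with $T=n/3$,
\[
\sum_L\varepsilon_L=\tr(S^2)+\frac{I}{2}-\frac{n^2+n}{2}\le\frac{I-a}{2}+n\eta ,
\]
which, letting $\eta\downarrow0$, also forces $I\ge a$. Apply Cauchy--Schwarz with unit weights, $\sum_L\sqrt{\varepsilon_L/2}\le\sqrt{V\sum_L\varepsilon_L/2}$, and use $V\le I/2$. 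This gives $I-\sqrt{I(I-a)/2}\le 2nt/3$.

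On $x\ge a$, the function $x-\sqrt{x(x-a)/2}$ has minimum $\frac{2+\sqrt2}{4}a$, attained at $x=\frac{1+\sqrt2}{2}a$, because $(x-\frac{2+\sqrt2}{4}a)^2-\frac{x(x-a)}{2}=\frac{1}{2}(x-\frac{1+\sqrt2}{2}a)^2$. Therefore $t\ge\frac{2+\sqrt2}{8}(n+3)$, which is exactly $n\le(8-4\sqrt2)t-3$. The $-3$ comes from the diagonal terms inside $a$, so there is no asymptotic slack and no small-$t$ check is needed.

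\textbf{The rich-line reduction also needs a definite threshold.} Carrying a larger $T$ through the computation would shrink $a$ and spoil the constant. Instead, treat separately the case $\ell>2n/3$, which is exactly when $T=n/3$ is not permitted in \cref{lem:frame}. In that case $t\ge\ell>2n/3\ge\frac{2+\sqrt2}{8}(n+3)$ for $n\ge6$, and the cases $n=4,5$ are checked directly.
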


\begin{proof}
Write \(n=|B|\) and \(m=|R|\). If a line contains \(h>2n/3\)
blue points, choose a blue point outside it. The \(h\) joining lines
require distinct red points, so \(m\ge h\). This implies the asserted
bound for \(n\ge6\), since
\[
    \frac{2n}{3}\ge\frac{2+\sqrt2}{8}(n+3).
\]
For \(n=4,5\), respectively \(h\ge3,4\), which also suffice;
for \(n=3\), this case cannot occur.

We may therefore assume that every line contains at most \(2n/3\)
blue points. Choose nonzero homogeneous representatives of the blue
points in $\mathbb C^3$. They span $\mathbb C^3$, and a subspace of
dimension one or two contains at most $1\le n/3$ or $2n/3$ of them,
respectively. Thus \cref{lem:frame} applies with $T=n/3$.
Fix $\eta>0$, apply its transformation to all blue and red
representatives, and normalize them to unit vectors $u_b$ and $v_r$.
The blue frame operator satisfies
\[
    S:=\sum_{b\in B}u_bu_b^*
    \preceq\left(\frac n3+\eta\right)I_3,
    \qquad \tr S=n.
\]
Let \(\mathcal L\) be the set of blue-spanned lines. For \(L\in\mathcal L\),
put
\[
    b_L=|B\cap L|,\qquad
    S_L=\sum_{b\in B\cap L}u_bu_b^*,\qquad
    \varepsilon_L=\tr(S_L^2)-\frac{b_L^2}{2}.
\]
The operator \(S_L\) has rank two and trace \(b_L\), so
\(\varepsilon_L\ge0\), and its two nonzero eigenvalues are
\(b_L/2\pm\sqrt{\varepsilon_L/2}\).
Choose one red point \(r_L\in R\cap L\). Since its representative
\(v_{r_L}\) is a unit vector in the range of \(S_L\), its Rayleigh
quotient is at least the smaller nonzero eigenvalue:
\[
    v_{r_L}^*S_Lv_{r_L}
    \ge\frac{b_L}{2}-\sqrt{\frac{\varepsilon_L}{2}}.
\]
Expanding the local quadratic forms gives
\[
    \sum_{L\in\mathcal L}v_{r_L}^*S_Lv_{r_L}
    =\sum_{L\in\mathcal L}\sum_{b\in B\cap L}|u_b^*v_{r_L}|^2
    \le\sum_{r\in R}\sum_{b\in B}|u_b^*v_r|^2
    =\sum_{r\in R}v_r^*Sv_r.
\]
Indeed, each pair $(b,r)\in B\times R$ occurs in the local sum
at most once: if $b\in L$ and $r=r_L$, then $L$ is the unique
line through $b$ and $r$. All summands are nonnegative, so including
the remaining pairs can only increase the sum. Combining this with
the eigenvalue bound above and the global frame bound yields
\begin{equation}\label{eq:cross-energy}
    \sum_{L\in\mathcal L}
        \left(\frac{b_L}{2}-\sqrt{\frac{\varepsilon_L}{2}}\right)
    \le\sum_{r\in R}v_r^*Sv_r
    \le m\left(\frac n3+\eta\right).
\end{equation}

Write \(I=\sum_Lb_L\), \(V=|\mathcal L|\), and
\(a=n(n+3)/3\). Each ordered pair of distinct blue points lies on
exactly one line of \(\mathcal L\), so
\[
    \sum_Lb_L^2=n(n-1)+I,\qquad
    \sum_L\tr(S_L^2)=\tr(S^2)+I-n.
\]
Since \(\tr(S^2)\le n(n/3+\eta)\), it follows that
\begin{equation}\label{eq:frame-excess}
    0\le\sum_L\varepsilon_L
    =\tr(S^2)+\frac I2-\frac{n^2+n}{2}
    \le\frac{I-a}{2}+n\eta.
\end{equation}
Cauchy--Schwarz and \cref{eq:cross-energy,eq:frame-excess} now give
\[
    I-\sqrt{V(I-a+2n\eta)}
    \le2m\left(\frac n3+\eta\right).
\]
The incidence counts are independent of \(\eta\). Letting
\(\eta\downarrow0\), \cref{eq:frame-excess} gives \(I\ge a\), and
the last inequality gives
\[
    I-\sqrt{V(I-a)}\le\frac{2nm}{3}.
\]
As \(V\le I/2\), we obtain
\begin{equation}\label{eq:planar-scalar}
    I-\sqrt{\frac{I(I-a)}2}\le\frac{2nm}{3}.
\end{equation}
For \(x\ge a\),
\[
    x-\sqrt{\frac{x(x-a)}2}\ge\frac{2+\sqrt2}{4}a.
\]
Indeed, \(x-(2+\sqrt2)a/4\ge0\), and
\[
    \left(x-\frac{2+\sqrt2}{4}a\right)^2
        -\frac{x(x-a)}2
    =\frac12\left(x-\frac{1+\sqrt2}{2}a\right)^2\ge0.
\]
Applying this to \cref{eq:planar-scalar} yields
\(m\ge(2+\sqrt2)(n+3)/8\), as required.
\end{proof}

\section{Higher-dimensional reductions}\label{sec:higher-reductions}

We collect the matroid reductions used in the higher-dimensional
induction. The \emph{contraction} $M/X$ is represented by the images
of the vectors indexed by $E\setminus X$ in the quotient by the span
of the vectors indexed by $X$. \emph{Simplification} removes loops
and retains one representative from each parallel class.
We simplify colored matroids by giving red priority: a parallel class
is red if it contains a red element, and blue otherwise.
In a loopless matroid that is not simple, a \emph{blue point} means a
rank-one flat containing no red element. Conditions (C1)--(C3) are then
interpreted in the corresponding simplification.

A \emph{circuit} is a minimally linearly dependent set, and a
\emph{cocircuit} is the complement in $E$ of a hyperplane.
The \emph{connected components} are the equivalence classes generated
by pairs of elements lying in a common circuit. The spans of distinct
components form a direct sum.

\begin{lemma}[{\cite[Proposition~2.3]{CKL}}]\label{lem:three-red}
Every admissible coloring of a simple matroid has at least three
red points.
\end{lemma}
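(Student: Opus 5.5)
We must show that an admissible coloring of a simple rank-$r$ matroid $M$ ($r\ge 3$, since (C1) and (C3) force $B$ to span and the rank-two case is degenerate; in rank two (C3) would ask for a rank-zero red-free flat skew to the whole space, and (C2) says every point-hyperplane contains red, so every element is red, contradicting (C1)) has $|R|\ge 3$. The plan is to argue by contradiction, first excluding $|R|\le 1$ and then $|R|=2$, using (C3) to produce hyperplanes that avoid the red points.

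\emph{Step 1: a red-free hyperplane from (C3).} By (C1) pick a blue line $L$, and by (C3) a red-free rank-$(r-2)$ flat $G$ skew to $L$. Let $L=\cl\{b_1,b_2\}$ with $b_1,b_2$ blue. Every hyperplane containing $G$ is of the form $H_p=\cl(G\cup\{p\})$, where $p$ is a point of $L$ (in the projective sense). The elements of $M$ outside $G$ are partitioned among these hyperplanes according to their projection from $G$ onto $L$. Since $G$ is red-free, a red point lies in exactly one $H_p$. (C2) requires every hyperplane $H$ of $M$, in particular each $H_p$ that is a genuine flat of $M$, to contain a red point. Now $H_{b_1}$ and $H_{b_2}$ are genuine hyperplanes: $\cl(G\cup\{b_i\})$ has rank $r-1$ because $b_i\notin\cl(G)$ by skewness. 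They are distinct, since their union spans $M$. Hence each contains a red point, and these red points are distinct because $H_{b_1}\cap H_{b_2}=G$ is red-free. This already gives $|R|\ge 2$.

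\emph{Step 2: excluding exactly two red points.} Suppose $R=\{r_1,r_2\}$ with $r_i\in H_{b_i}\setminus G$. Apply the same construction to a different blue line, to force a third hyperplane through $G$ or a different complement. The plan is to take $b_3$ blue outside $H_{b_1}\cup H_{b_2}$ if one exists; its hyperplane $\cl(G\cup b_3)$ is then a third hyperplane containing $G$, and it needs a red point distinct from $r_1,r_2$. This is a contradiction.

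If no such $b_3$ exists, then $B\subseteq H_{b_1}\cup H_{b_2}$. In that case I would instead use the blue line $L'=\cl\{b_1,b\}$ for a blue $b\in H_{b_1}\setminus G$ when possible, or $\cl\{g,b_2\}$ with $g\in G$. Running Step 1 with a new complement $G'$ yields two hyperplanes through $G'$ with distinct red points, which must again be $r_1,r_2$. Tracking where $r_1,r_2$ sit relative to $G'$ should force a third blue direction or contradict (C2) for a hyperplane spanned by blue points avoiding both $r_1$ and $r_2$. Concretely, one can choose a hyperplane spanned by $G$-points together with $b_1$ replaced by a blue point off the line $\cl\{r_1,\cdot\}$.

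\emph{Main obstacle.} The hard part is the case $B\subseteq H_{b_1}\cup H_{b_2}$ in Step 2, which resembles the two-skew-line example. Here (C3) must be applied to a well-chosen second line to break the symmetry. I would try first the line through $b_2$ and a point of $G$, and show that the resulting red-free complement produces a hyperplane missing both $r_1$ and $r_2$, contradicting (C2).
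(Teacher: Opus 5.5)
Your Step 1 is fine and disposes of $|R|\le 1$; the paper does this more directly, by extending a lone red point to a basis and taking the closure of the other basis elements. The gap is in Step 2, which carries the whole content of the lemma.

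First, your case split makes no progress. If $|R|=2$, distinct hyperplanes through the red-free flat $G$ meet exactly in $G$, so they need pairwise distinct red points. Hence there are exactly two such hyperplanes, and every element of $M$ lies in $H_{b_1}\cup H_{b_2}$. The subcase with a third point $b_3$ therefore never occurs, and the case you call the main obstacle is the entire problem.

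For that case you give only a plan (``should force'', ``I would try''). Your concrete suggestion, applying (C3) to the line through $b_2$ and a point of $G$, fails for an arbitrary point of $G$. In $\mathbb C^4$, colour $e_1,e_2,e_3,e_4$ blue and $e_1+e_3$, $e_2+e_4$ red, and take $b_1=e_3$, $b_2=e_4$, $G=\{e_1,e_2\}$. This is exactly your hard case, and it satisfies (C1) and (C2): no hyperplane misses both red points. The blue line $\{e_1,e_4\}$ has the red-free skew complement $\{e_2,e_3\}$, so (C3) applied to it yields nothing. Only the lines $\{e_1,e_3\}$ and $\{e_2,e_4\}$ violate (C3).

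The missing idea is to choose the blue line inside a connected component. Two distinct elements lie in a common cocircuit if and only if they are in the same connected component. A cocircuit containing both $r_1$ and $r_2$ would be the complement of a red-free hyperplane, so (C2) forces $r_1,r_2$ into different components $C_1,C_2$.

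Since $B$ spans $M$ and $M$ is simple, the blue points of $C_1$ span $C_1$, which has rank at least two. So there is a blue line $L\subseteq C_1$. Any rank-$(\rk(M)-2)$ flat skew to $L$ must supply all the rank outside $C_1$. Being a flat, it then contains every other component, including $r_2$, contradicting (C3).

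Without this connectivity argument, or an equivalent structural input that tells you which line to feed into (C3), your proposal does not establish the two-red case.
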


\begin{proof}
Condition (C2) excludes an empty red set. If there were a single red
point, extend it to a basis; the closure of the other basis elements
would be a hyperplane containing no red point.
Suppose instead that the red points are \(r_1,r_2\).
By (C2), no cocircuit contains both of them. Two distinct elements
lie in a common cocircuit if and only if they belong to the same
connected component; see~\cite[Chapter~4]{Oxley2011}.
Thus \(r_1,r_2\) lie in distinct components \(C_1,C_2\).
Since \(B\) spans \(M\), the blue points in \(C_1\) span that
component. Simplicity and the fact that the blue points span \(C_1\) imply \(\rk_M(C_1)\ge2\), so \(C_1\)
contains a blue line \(L\). If a rank-\((\rk(M)-2)\) flat \(G\)
is skew to \(L\), then \(G\cup L\) spans \(M\). As \(L\subseteq C_1\),
the flat \(G\) must contain every other component, including \(r_2\).
This contradicts (C3).
\end{proof}

\begin{lemma}\label{lem:complement}
Let \((R,B)\) be an admissible coloring of \(M\).
If \(F\) is a rank-\(q\) flat of \(M|B\) with \(q\ge2\), then there is a
red-free flat \(P\) of rank \(\rk(M)-q\) skew to \(F\).
Moreover, \((M/P)|F=M|F\).
\end{lemma}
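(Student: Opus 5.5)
Our plan is to reduce to condition (C3) by choosing a blue line inside $F$ and then thinning its red-free complement. Since $q\ge2$, the flat $F$ of $M|B$ contains a blue line $L$; take $L=\cl_{M|B}(\{x,y\})$ for two elements $x,y$ of a basis $\{x,y,x_3,\dots,x_q\}$ of $F$. By (C3) there is a red-free flat $G$ of $M$ with $\rk_M(G)=r-2$ that is skew to $L$, where $r=\rk(M)$. We then have $\rk_M(G\cup L)=r$.

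Next we shrink $G$ to a complement of all of $F$. Consider the contraction $M/L$, or equivalently work modulo the span of $L$. In $M/L$ the set $G$ is spanning and independent of rank $r-2$, and the images of $x_3,\dots,x_q$ form an independent set of rank $q-2$. Extend this set to a basis of $M/L$ using elements of a basis of $G$. This means choosing $g_1,\dots,g_{r-q}\in G$ such that $\{x_3,\dots,x_q,g_1,\dots,g_{r-q}\}$ is a basis of $M/L$, which is possible by basis exchange inside the spanning set $G$ of $M/L$. Then $\{x,y,x_3,\dots,x_q,g_1,\dots,g_{r-q}\}$ is a basis of $M$.

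Put $P=\cl_M(\{g_1,\dots,g_{r-q}\})$. It is a flat of rank $r-q$, and because the combined set is a basis of $M$, the flat $P$ is skew to $\cl_M(F)$ and hence to $F$. The point needing care is that $P$ is red-free, since the closure could in principle pick up red elements outside $G$. This is fine: $G$ is a flat containing the $g_i$, so $P\subseteq\cl_M(G)=G$, and $G\cap R=\varnothing$.

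Finally, for $(M/P)|F=M|F$, skewness gives, for every $X\subseteq F$,
\[
\rk_{M/P}(X)=\rk_M(X\cup P)-\rk_M(P)=\rk_M(X).
\]
The inequality $\rk_M(X\cup P)\le\rk_M(X)+\rk_M(P)$ is submodularity. Equality holds because $X\subseteq F$ and $F$ is skew to $P$: subspaces of skew subspaces stay skew. Equal rank functions on $F$ give equal restricted matroids. No step is a real obstacle; the only subtlety is the red-freeness of the closure, which is handled by $G$ being a flat.
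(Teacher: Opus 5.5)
Your proof is correct and follows the paper's argument. You take a blue line in \(F\), obtain the red-free complement \(G\) from (C3), and complete a basis of \(F\) to a basis of \(M\) with elements of \(G\); the paper phrases this as choosing a basis of \(G\) modulo \(F\), while you do it in \(M/L\). Red-freeness then follows because the closure stays inside the flat \(G\), exactly as in the paper (the only slip is calling \(G\) ``independent'' in \(M/L\), when what you use is that it spans \(M/L\)).
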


\begin{proof}
Choose a line \(L\) of \(M|F\), and use (C3) to obtain a red-free
rank-\((\rk(M)-2)\) flat \(G\) skew to \(L\).
Then \(F\cup G\) spans \(M\). Choose \(I\subseteq G\) that is a basis
of \(G\) modulo \(F\), and put \(P=\cl_M(I)\). The flat \(P\) has rank
\(\rk(M)-q\), is skew to \(F\), and, since it is contained in \(G\), it is
red-free. Since \(P\) is skew to \(F\), contracting \(P\) preserves the
rank of every subset of \(F\).
\end{proof}

Contraction of a red-free flat preserves (C2), because hyperplanes in
the contraction lift to hyperplanes containing that flat and no red
element becomes a loop. Also, a spanning restriction retaining every
red element preserves (C2): each hyperplane of the restriction closes
to a hyperplane of the original matroid.

In particular, every blue line \(L\) in an admissible matroid has at most
\(t\) points. To see this, take its complement \(P\) from
\cref{lem:complement}. For each \(x\in L\), the set
\(\cl_M(P\cup\{x\})\) is a hyperplane and hence contains a red point.
These hyperplanes intersect pairwise exactly in the red-free flat \(P\),
so they require distinct red points.
This is also~\cite[Lemma~2.4(1)]{CKL}.

The degree bound of Kroeker and Lund~\cite[Theorem~1.3]{KLFrame}
states that a set of $n$ points in $\mathbb C^d$ of affine dimension
$d\ge2$ has a point incident to at least
\[
    n+1-\frac{2n}{d+1}
\]
spanned lines. Its contraction consequence is the following.

\begin{lemma}\label{lem:dirac}
Let \(N\) be a simple complex-representable matroid of rank
\(q\ge3\), with a coloring satisfying (C2), at most \(t\) red points,
and a spanning blue set of size \(n\).
There is a blue point \(e\) such that the red-priority simplification
of \(N/e\) satisfies (C2) and has at least
\((q-2)n/q+1-t\) blue points.
\end{lemma}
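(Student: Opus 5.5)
Apply the Kroeker--Lund degree bound to the blue points and then account for how red points can absorb blue points in the contraction. Choose homogeneous representatives of the \(n\) blue points in \(\mathbb C^q\). Since they span, I pass to an affine chart: pick a hyperplane of \(\mathbb C^q\) through the origin containing none of the blue vectors and rescale so every blue vector lies on a parallel affine hyperplane. This gives \(n\) points of affine dimension \(d=q-1\ge2\). By \cite[Theorem~1.3]{KLFrame}, some blue point \(e\) lies on at least
\[
    n+1-\frac{2n}{q}=\frac{(q-2)n}{q}+1
\]
lines spanned by the blue points. Spanned lines of the point set in the chart correspond exactly to lines of \(N|B\) through \(e\), that is, to rank-two flats of \(N|B\) containing \(e\).

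Next I relate these lines to the simplification of \(N/e\). Each line \(\ell\) of \(N\) through \(e\) becomes a single rank-one flat of \(N/e\), and distinct lines through \(e\) give distinct parallel classes. Elements of \(\ell\setminus\{e\}\) become loops only if they are parallel to \(e\), which cannot happen because \(N\) is simple. So the parallel classes of \(N/e\) are in bijection with the lines of \(N\) through \(e\). Under red priority, such a class is blue exactly when the corresponding line \(\cl_N(\{e,x\})\) contains no red element. Each red element lies on exactly one line through \(e\), since \(e\) is blue and not red. Hence at most \(t\) of the blue-spanned lines through \(e\) are turned red. This leaves at least \((q-2)n/q+1-t\) blue points in the simplification of \(N/e\).

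Finally I check (C2). The point \(e\) is blue, so \(\{e\}\) is a red-free flat. By the remark after \cref{lem:complement}, contracting a red-free flat preserves (C2): every hyperplane of \(N/e\) lifts to a hyperplane of \(N\) containing \(e\), which contains a red element, and no red element becomes a loop. Simplification with red priority preserves (C2) as well, because a hyperplane of the simplification corresponds to a hyperplane of the loopless matroid \(N/e\) containing a red element, and that element's parallel class is then colored red.

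The main obstacle is the translation in the first step: matching the affine formulation of the degree bound with the projective, matroidal one. In particular, I must make sure the chart exists over \(\mathbb C\), which holds because a finite union of hyperplanes avoiding the vectors can be chosen generically. I must also confirm that "affine dimension \(q-1\)" follows from the blue set spanning the rank-\(q\) matroid. Both are routine, so the rest is bookkeeping.
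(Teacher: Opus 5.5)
Your proposal is correct and follows essentially the same route as the paper: pass to an affine chart, apply the Kroeker--Lund degree bound with \(d=q-1\) to get a blue point \(e\) on at least \((q-2)n/q+1\) blue-spanned lines, note that these lines give distinct parallel classes of \(N/e\) of which at most \(t\) contain a red element, and observe that contracting the red-free flat \(\{e\}\) preserves (C2). Your additional details (existence of the chart, the bijection between lines through \(e\) and parallel classes of \(N/e\), and preservation of (C2) under red-priority simplification) just spell out steps the paper states briefly.
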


\begin{proof}
Choose an affine representation of \(N\); the blue set has affine
dimension \(q-1\), since it spans rank \(q\). The degree bound above
gives a blue point \(e\) incident to at least \((q-2)n/q+1\)
blue-spanned lines.

The blue-spanned lines through \(e\) give distinct parallel classes
in \(N/e\). At most \(t\) of these classes contain a red element,
so at least \((q-2)n/q+1-t\) remain blue after simplification.
Since \(e\) is blue, contraction preserves (C2).
\end{proof}

Let \(\T_L(N)\) denote the \emph{principal truncation} of a rank-two flat \(L\)
in a loopless matroid \(N\). Its rank function is
\begin{equation}\label{eq:truncation}
    \rk_{\T_L(N)}(X)=
    \begin{cases}
        \rk_N(X)-1,&L\subseteq\cl_N(X),\\
        \rk_N(X),&\text{otherwise}.
    \end{cases}
\end{equation}
For a complex representation, choose an added vector
in the two-dimensional span of \(L\), outside the spans of all
ground-set subsets that do not span \(L\). Such a choice exists because
these spans meet the span of \(L\) in finitely many proper subspaces
and the field is infinite. Freely adding this point on \(L\) and
contracting it gives the displayed rank function, so principal
truncation preserves complex representability.
It creates no loops, since a rank-one set cannot span \(L\).
The rank formula also gives
\(\cl_N(X)\subseteq\cl_{\T_L(N)}(X)\) for every \(X\), so every
flat after truncation was a flat before truncation.

\begin{lemma}[{\cite[Lemma~2.4(2)]{CKL}}]\label{lem:repair}
Suppose that a colored loopless matroid \(N\) satisfies (C2).
If every rank-\((\rk(N)-2)\) flat skew to a line \(L\) contains a red
element, then \(\T_L(N)\), with the same ground-set coloring,
also satisfies (C2).
\end{lemma}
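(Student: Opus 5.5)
We need to show that every hyperplane $H$ of $\T_L(N)$ contains a red element. The plan is to lift $H$ to a hyperplane of $N$, or to a corank-two flat of $N$, and then use either (C2) for $N$ or the hypothesis on flats skew to $L$.

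First record the basic facts about $\T_L(N)$. It has rank $\rk(N)-1$, since $E$ spans $L$. By the remark after \cref{eq:truncation}, every flat of $\T_L(N)$ is a flat of $N$. So let $H$ be a hyperplane of $\T_L(N)$; it is a flat $F$ of $N$. Split according to whether $L\subseteq\cl_N(H)=H$.

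\emph{Case 1: $L\subseteq H$.} Then $\rk_{\T_L(N)}(H)=\rk_N(H)-1$. Since $H$ is a hyperplane of the truncation, $\rk_N(H)=\rk(N)-1$. Hence $H$ is a hyperplane of $N$, and (C2) for $N$ gives a red element of $H$.

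\emph{Case 2: $L\not\subseteq H$.} Then $\rk_N(H)=\rk_{\T_L(N)}(H)=\rk(N)-2$, so $H$ is a rank-$(\rk(N)-2)$ flat of $N$ not containing $L$. There are two subcases.

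If $H$ is skew to $L$, the hypothesis says directly that $H$ contains a red element.

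If $H$ is not skew to $L$, then $\rk_N(H\cup L)<\rk_N(H)+2$. Since $L\not\subseteq H$, this forces $\rk_N(H\cup L)=\rk(N)-1$. So $H'=\cl_N(H\cup L)$ is a hyperplane of $N$, and it contains $L$. I must show that $H$ itself, not merely $H'$, contains a red element. This step is the main obstacle. The natural route is to compute the truncated rank of $H\cup L$: because $L\subseteq\cl_N(H\cup L)$, we get $\rk_{\T_L(N)}(H\cup L)=\rk(N)-2=\rk_{\T_L(N)}(H)$. Therefore $L\subseteq\cl_{\T_L(N)}(H)=H$, contradicting the case assumption. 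So this subcase cannot occur.

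The key technical point to verify carefully is therefore that flats of $\T_L(N)$ not containing $L$ are exactly those flats $F$ of $N$ with $\cl_N(F\cup L)$ of rank $\rk_N(F)+2$, together with the rank computation in Case 1. Once these are checked, the argument is complete, and the coloring of each element is untouched, since the ground set is unchanged.
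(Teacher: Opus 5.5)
Your proof is correct and follows essentially the same route as the paper: view the truncated hyperplane $H$ as a flat of $N$, apply (C2) when $L\subseteq H$, and otherwise rule out the non-skew case because $H\cup L$ would have the same rank as $H$ in $\T_L(N)$, forcing $L\subseteq H$. The only differences are minor: the paper runs that rank computation on $H\cup\{x\}$ for some $x\in L\setminus H$ rather than on $H\cup L$, and the ``key technical point'' in your last paragraph is already settled by your subcase argument, which needs only the forward direction of that characterization.
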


\begin{proof}
Put \(r=\rk(N)\), and let \(H\) be a hyperplane of \(\T_L(N)\).
Then \(H\) is an old flat. If it contains \(L\), the rank formula
gives \(\rk_N(H)=r-1\), so it contains a red element by (C2).
Otherwise \(\rk_N(H)=r-2\). If \(\rk_N(H\cup L)=r-1\), then for
any \(x\in L\setminus H\), the set \(H\cup\{x\}\) spans \(L\)
in \(N\) and has rank \(r-2\) after truncation. This contradicts
flatness of \(H\) after truncation. Hence \(\rk_N(H\cup L)=r\),
so \(H\) is skew to \(L\) and contains a red element by hypothesis.
\end{proof}

The following rank identity is a specialization of the accounting
in~\cite[Proposition~4.3]{CKL}. We include a direct proof for sequences
of line truncations.

\begin{lemma}\label{lem:budget}
Let \(N_0\) be a simple loopless matroid, and let \(N_D\) be obtained
from \(N_0\) by \(D\) successive principal truncations of lines,
retaining the ground set. Let \(\Q\) be the collection of nontrivial
parallel classes of \(N_D\). For \(Q\in\Q\), put
\(c_Q=\rk_{N_0}(Q)-1\). Then \(c_Q\ge1\) and
\[
    \sum_{Q\in\Q}c_Q=D.
\]
\end{lemma}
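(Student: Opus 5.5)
We argue by induction on \(D\), tracking how a single principal truncation changes ranks and parallel classes. The key quantity is the potential
\[
    \Phi(N)=\sum_{Q}\bigl(\rk_{N_0}(Q)-1\bigr),
\]
where \(Q\) ranges over the parallel classes of \(N\), including singletons, which contribute zero. For \(N=N_0\) simple, every class is a singleton, so \(\Phi(N_0)=0\). The claim \(c_Q\ge1\) for nontrivial classes is immediate: a nontrivial class contains two distinct elements, and \(N_0\) is simple, so their \(N_0\)-rank is at least \(2\). It therefore suffices to show that each truncation step increases \(\Phi\) by exactly one.

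First we need a global description. Because truncation decreases ranks by at most one and never creates loops, the rank function of \(N_j\) is determined by \(N_0\) and the chosen lines. One convenient invariant is that for every set \(X\),
\[
    \rk_{N_j}(X)=\rk_{N_0}(X)-(\text{number of truncation steps that drop the rank of }X).
\]
For a parallel class \(Q\) of \(N_j\) we have \(\rk_{N_j}(Q)=1\). So \(\rk_{N_0}(Q)-1\) counts the steps at which the rank of \(Q\) (or of the growing union that becomes \(Q\)) dropped. The plan is to make this precise by analysing one step \(N=N_{j}\to N'=\T_L(N_j)\), with \(L\) a rank-two flat of \(N\).

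In the step \(N\to N'\), the rank formula \eqref{eq:truncation} shows that the only rank-one sets that newly appear are sets \(X\) with \(\rk_N(X)=2\) and \(L\subseteq\cl_N(X)\), that is, \(\cl_N(X)=L\). Hence the parallel classes of \(N'\) are those of \(N\), except that all classes of \(N\) contained in \(L\) merge into a single class \(Q^*=L\); no other classes change, since sets not spanning \(L\) keep their rank. If \(L\) consists of the \(N\)-classes \(Q_1,\dots,Q_s\) with \(s\ge2\), then
\[
    \Phi(N')-\Phi(N)=\rk_{N_0}(L)-1-\sum_{i=1}^s\bigl(\rk_{N_0}(Q_i)-1\bigr).
\]
We must show this equals \(1\), i.e. \(\rk_{N_0}(L)=\sum_i\rk_{N_0}(Q_i)-s+2\). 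This is the main obstacle. It amounts to a modularity statement: the classes \(Q_i\) must be mutually "as skew as possible" in \(N_0\), modulo the merges already encoded.

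To attack it, we strengthen the induction hypothesis to a rank formula valid for unions of classes. For any set \(X\) that is a union of parallel classes \(Q_1,\dots,Q_s\) of \(N_j\), the claim is
\[
    \rk_{N_0}(X)=\rk_{N_j}(X)+\sum_i\bigl(\rk_{N_0}(Q_i)-1\bigr).
\]
Equivalently, contracting the classes back to points loses exactly their excess rank. Applying this to \(X=L\) with \(\rk_N(L)=2\) gives precisely the required identity. It then remains to verify that the strengthened formula survives a truncation step. For a union \(X\) of \(N'\)-classes, split into two cases. If \(X\not\supseteq L\), equivalently \(L\not\subseteq\cl_N(X)\), the rank is unchanged and the classes are unchanged. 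If \(X\supseteq L\), the rank drops by one and the class sum gains exactly one by the computation above. Here we use that, since flats of \(N'\) are flats of \(N\) and classes are unions of old classes, \(L\subseteq\cl_N(X)\) for a union of new classes forces the analysis to reduce to \(X\supseteq Q^*\) up to closure. This closure bookkeeping is the delicate point. If needed, we would state the invariant for closed unions, i.e. flats of \(N_j\), and apply it to the flat \(L\). Summing the increments over all \(D\) steps gives \(\sum_{Q\in\Q}c_Q=\Phi(N_D)=D\).
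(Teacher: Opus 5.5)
Your potential-function argument can be completed, but not with the invariant as you first state it. The formula $\rk_{N_0}(X)=\rk_{N_j}(X)+\sum_i(\rk_{N_0}(Q_i)-1)$ is false for unions of classes that are not flats.

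Take $N_0=U_{3,5}$ on $\{a,b,c,d,e\}$ and truncate the line $L=\{a,b\}$. The classes of $N_1$ are $\{a,b\},\{c\},\{d\},\{e\}$. The set $X=\{c,d,e\}$ is a union of classes with zero excess. It has $\rk_{N_0}(X)=3$ but $\rk_{N_1}(X)=2$, because $L\subseteq\cl_{N_0}(X)$. This is exactly where your asserted equivalence ``$X\not\supseteq L$ iff $L\not\subseteq\cl_N(X)$'' fails. So the induction step you sketch breaks at the very first truncation.

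The fallback you mention is the right fix, and it should be the actual statement: for every flat $X$ of $N_j$, $\rk_{N_0}(X)=\rk_{N_j}(X)+\sum(\rk_{N_0}(Q)-1)$, summed over the $N_j$-classes $Q\subseteq X$. It propagates because a flat $X$ of $N'=\T_L(N)$ is a flat of $N$ and a union of $N'$-classes. Hence $X$ either contains the merged class $L$ or is disjoint from it.
\begin{itemize}
\item If $X\supseteq L$, the rank drops by one and the excess rises by exactly one. This follows by applying the invariant for $N$ to the flat $L$, which gives your identity $\rk_{N_0}(L)=\sum_i\rk_{N_0}(Q_i)-s+2$.
\item If $X$ is disjoint from $L$, then $L\not\subseteq X=\cl_N(X)$, so neither the rank of $X$ nor its classes change.
\end{itemize}
With this restriction your proof is complete.

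The paper reaches the same conclusion without a per-step potential or any merging analysis. It applies backward flat-preservation only to the final classes. Each $Q\in\Q$ is a flat at every stage, so its rank drops at step $j$ exactly when $L_j\subseteq Q$, giving $c_Q=|\{j:L_j\subseteq Q\}|$. Each $L_j$ has rank one from step $j$ onward, so it lies in exactly one final class, which is necessarily nontrivial. Double counting the pairs $(j,Q)$ with $L_j\subseteq Q$ then gives $D$.

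Your invariant is a stronger statement: rank accounting for every flat at every stage, which the paper's count also yields. For this lemma, though, the direct count is shorter and avoids the closure bookkeeping that caused the problem in your first formulation.
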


\begin{proof}
As observed above, every flat after a principal truncation is a flat
of the preceding matroid. Thus every final parallel class \(Q\) is a
flat at each earlier stage.

Let \(L_j\) be the line truncated at step \(j\).
By~\cref{eq:truncation}, the rank of \(Q\) drops at this step exactly
when \(L_j\subseteq Q\). Since its final rank is one,
\[
    \rk_{N_0}(Q)-1
    =\bigl|\{j:L_j\subseteq Q\}\bigr|.
\]
Each \(L_j\), once truncated, has rank one and retains rank one
thereafter, because principal truncation of a line cannot lower
the rank of a rank-one set. It therefore lies in exactly one final
parallel class. That class is nontrivial. Conversely, since \(N_0\)
is simple, every nontrivial final class has initial rank at least two.
Summing counts each truncation exactly once.
\end{proof}

\section{Proof of the higher-dimensional bounds}\label{sec:higher-bounds}

The planar bound starts a simultaneous induction on rank for the size of blue
flats and entire admissible configurations.
After reducing to a chosen blue flat, we contract a point supplied by the
degree bound \cref{lem:dirac} and use principal truncations to repair failures of
admissibility.
The principal truncation argument results in parallel classes, and we use induction to bound the number of original blue points in each parallel class.
The rank identity \cref{lem:budget} lets us combine these
bounds with the bound for the remaining blue points. Comparing the
result with the number of blue points retained by the initial
contraction closes the induction.

Put
\[
    c=8-4\sqrt2,\qquad
    \gamma=\frac{3-c}{6}=\frac{4\sqrt2-5}{6}.
\]

\begin{theorem}\label{thm:flats}
Let $(R,B)$ be an admissible coloring of a simple complex-representable
matroid $M$ of rank $r$, and let $t=|R|$. For every rank-$q$ flat
$F$ of $M|B$, where $q\ge2$,
\[
    |F|\le
    \begin{cases}
        t,&q=2,\\[2pt]
        \displaystyle 1+\frac{(q-2)(3q-1)}2(t-1)
            -\gamma q(q-1)t,&q\ge3.
    \end{cases}
\]
Moreover,
\[
    |B|\le\frac{r(3r-7)}2(t-1)-\gamma r(r-1)t.
\]
\end{theorem}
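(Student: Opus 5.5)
We argue by simultaneous induction on $q$ (for flats) and on $r$ (for the whole blue set). Write $f(q,t)$ for the flat bound and $g(r,t)$ for the bound on $|B|$.

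\emph{Base cases.} The case $q=2$ is the observation after \cref{lem:complement}: a blue line has at most $t$ points. For the whole blue set, the case $r=3$ should follow from \cref{thm:planar}. Here $g(3,t)=3(t-1)-6\gamma t=(c-3)t+3t-3=ct-3$, so it coincides with the planar bound.

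\emph{Reduction from the whole set to flats.} The bound on $|B|$ in rank $r$ should follow from the flat bound with $q=r$: since $B$ spans $M$, it is itself a blue flat of rank $r$, and a direct check gives $g(r,t)\ge f(r,t)$. Thus the main work is the flat bound for $q\ge3$.

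\emph{Contraction step.} Take a rank-$q$ blue flat $F$ and apply \cref{lem:complement} to obtain a red-free complement $P$. Then $N=(M/P)|(F\cup R)$, after red-priority simplification, is a rank-$q$ matroid satisfying (C2), with at most $t$ red points, and with $F$ spanning and unchanged in structure. Apply \cref{lem:dirac} to obtain a blue point $e$ for which $N'=\mathrm{si}(N/e)$ satisfies (C2) and has at least $(q-2)|F|/q+1-t$ blue points. Its rank is $q-1$.

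\emph{Repair step.} $N'$ may fail (C3). As long as some blue line $L$ has every skew rank-$(q-3)$ flat containing a red point, replace the matroid by $\T_L$; by \cref{lem:repair} this preserves (C2). Each truncation drops one rank, so we must stop before (C1) fails. The process ends either with an admissible coloring of rank $q'=q-1-D$ (after simplification), or with blue rank below two.

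\emph{Counting.} In the final matroid, every blue point in a nontrivial parallel class $Q$ comes from a set that was a blue flat of rank $c_Q+1$ in $N'$. Lifting that set back through the contraction of $e$ gives a blue flat of $M$ of rank $c_Q+2$ containing $e$. Its size is bounded inductively by $f(c_Q+2,t)$, which counts the points of $Q$ plus $e$. By \cref{lem:budget}, $\sum c_Q=D$. The remaining blue points number at most $g(q-1-D,t)$ by induction, and merging the classes costs at most $|\Q|\le D$ points relative to this count. This yields
\[
\frac{q-2}{q}|F|+1-t\le g(q-1-D,t)+\sum_{Q}\bigl(f(c_Q+2,t)-1\bigr).
\]
Since $f$ is convex and quadratic in rank, the right-hand side is maximized by concentrating the budget $D$ in a single class, or by $D=0$. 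Comparing the two extremes against $g(q-1,t)$ and solving for $|F|$ should reproduce $f(q,t)$. The $\gamma$ terms propagate because $g$ and $f$ carry the same correction $-\gamma q(q-1)t$, inherited from the planar base case.

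\emph{Main obstacle.} I expect the hardest step to be the bookkeeping in the counting stage. One must check precisely which original blue points lie in each merged class; points of $N/e$ are themselves classes of lines through $e$, so each $Q$ corresponds to a flat through $e$. One must also verify that the quadratic recurrence closes exactly with the constants $(q-2)(3q-1)/2$ and $\gamma q(q-1)$. The factor $q/(q-2)$ from the degree bound has to be absorbed, and I would first check the inequality at the extreme choices of $D$ and then argue by convexity.
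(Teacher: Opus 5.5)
There is a genuine gap where the flat bound and the whole-set bound are linked. Your overall architecture matches the paper's: red-free complement, degree bound, principal truncations, the budget identity, and induction on merged classes. The trouble starts with the inequality $g(r,t)\ge f(r,t)$ in your reduction, which is false:
\[
    f(q,t)-g(q,t)=1+\frac{(q-2)(3q-1)-q(3q-7)}{2}\,(t-1)=t .
\]
So the bound on $|B|$ is stronger than the flat bound at $q=r$ by exactly $t$, and cannot be deduced from it. Yet you need $g$ itself inside the induction, to bound the terminal admissible matroid. Write $H(j)=f(j+2,t)-1$ for $j\ge1$ and $H(0)=0$. For the identity $f(q,t)=t+\frac{q}{q-2}\bigl(f(q-1,t)+t-2\bigr)$ to close the induction, the counting must give $n_0\le H(q-3)$. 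If you only have $f(s,t)=H(s-2)+1$ for the terminal matroid, the case $D=0$ already gives just $n_0\le H(q-3)+1$.

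The missing $t$ comes from a miscount in your contraction step. In $(M/P)|(F\cup R)$ a red point can become parallel to a point $x\in F$, since it may lie in $\cl_M(P\cup\{x\})$. Red-priority simplification then recolors $x$ red. Consequences:
\begin{enumerate}[label=(\roman*)]
\item The blue set of $N$ has at least $|F|-t$ elements, not $|F|$.
\item That blue set need not span $N$, which \cref{lem:dirac} requires. This non-spanning case must be treated separately: the survivors lie in a blue flat of rank at most $q-1$, so $|F|\le t+f(q-1,t)\le f(q,t)$.
\item In the spanning case the correct count is $n_0\ge\frac{q-2}{q}(|F|-t)+1-t$.
\end{enumerate}

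You should also bound the right-hand side of your counting inequality by $H(q-3)$, not by $g(q-1,t)$. A single class can absorb the whole budget $D=q-3$ with terminal rank two, where no blue point survives; do not use the negative value $g(2,t)$ there. The clean route is $g(s,t)=H(s-2)-(t-1)\le H(s-2)$ plus superadditivity of $H$, after which solving gives exactly $f(q,t)$.

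The whole-set bound then needs its own run of the argument with $F=B$. There $P=\varnothing$ and no red point merges with a blue one. Your uncorrected count $\frac{q-2}{q}|B|+1-t$ is valid in this case, and the same bound $n_0\le H(q-3)$ gives $|B|\le f(q,t)-t=g(q,t)$. In effect, the inequality you wrote is the ambient-case inequality. It is not valid for proper flats, and the ambient bound cannot be obtained from the flat bound.
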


By \cref{lem:three-red}, $t\ge3$. For such $t$ and \(q\ge3\), define
\begin{align}
    U_q(t)&=1+\frac{(q-2)(3q-1)}2(t-1)-\gamma q(q-1)t,
        \label{eq:U}\\
    V_q(t)&=U_q(t)-t
        =\frac{q(3q-7)}2(t-1)-\gamma q(q-1)t.
        \label{eq:V}
\end{align}
Both functions increase with \(t\). We will prove that every
rank-\(q\) blue flat has at most \(U_q(t)\) points and that an
admissible matroid of rank \(q\) has at most \(V_q(t)\) blue points.
These are the two bounds in \cref{thm:flats}.

The auxiliary function \(H_t\) bounds the number of original blue points in each parallel class merged during the truncation stage of the argument.
Since it is superadditive, we can combine the contributions of all merged parallel classes and the remaining blue points.
For fixed \(t\), let
\[
    H_t(0)=0,\qquad H_t(j)=U_{j+2}(t)-1\quad(j\ge1).
\]
Writing
\[
    a=\frac32(t-1)-\gamma t,\qquad
    b=\frac52(t-1)-3\gamma t,
\]
we have \(H_t(j)=aj^2+bj-2\gamma t\) for \(j\ge1\).
Since \(t\ge3\), we have \(0\le\gamma t\le(t-1)/6\).
Thus \(a\ge4(t-1)/3>0\), \(b\ge2(t-1)>0\), and
\(H_t(1)\ge3(t-1)\ge1\), so \(H_t\) is increasing.
For positive integers \(i,j\),
\[
    H_t(i+j)-H_t(i)-H_t(j)=2aij+2\gamma t\ge0,
\]
so \(H_t\) is superadditive on the nonnegative integers.
Direct calculation also gives, for \(q\ge4\),
\begin{equation}\label{eq:recurrence}
    U_q(t)
    =t+\frac{q}{q-2}\bigl(U_{q-1}(t)+t-2\bigr)
    \ge t+U_{q-1}(t).
\end{equation}
The initial values are
\begin{equation}\label{eq:initial}
    V_3(t)=ct-3,\qquad
    U_3(t)=(c+1)t-3\ge2t.
\end{equation}

\begin{proof}[Proof of \cref{thm:flats}]
We prove the blue-flat and ambient-rank bounds simultaneously by
induction on \(q\), uniformly over all admissible ambient matroids
representable over $\mathbb C$. The rank-two blue-flat bound was
established after \cref{lem:complement}; rank-one blue flats have
one point.

First consider any simple rank-three configuration satisfying (C2)
with a spanning blue set of size \(n\) and at most \(t\) red points.
The blue points are noncollinear, and every line spanned by two
of them contains a red point. Thus \cref{thm:planar} gives
\(n\le ct-3=V_3(t)\), proving the ambient-rank bound.

Now let \(F\) be a rank-three blue flat in an admissible matroid.
Contract a red-free complement from \cref{lem:complement}, restrict
to the images of \(F\cup R\), and simplify with red priority.
All ranks inside \(F\) are preserved, and at most \(t\) of its
points become red. If the remaining blue points span rank three,
the preceding bound gives \(|F|\le t+V_3(t)=U_3(t)\).
Otherwise, their original representatives have rank at most two
in \(M|B\), so the line or point bound gives
\(|F|\le2t\le U_3(t)\). This establishes both initial bounds.

Let \(q\ge4\), let \(F\) be a rank-\(q\) flat of \(M|B\), and put
\(m=|F|\). Contract a red-free complement \(P\) from
\cref{lem:complement}, restrict to the images of \(F\cup R\), and
simplify with red priority. Denote the resulting rank-\(q\) matroid
by \(N\), and its blue set by \(B_N\). It satisfies (C2) and has at
most \(t\) red points.

Contraction of \(P\) preserves all ranks inside \(F\), so its points
remain distinct. Each red image can coincide with at most one point
of \(F\). Hence
\[
    |B_N|\ge m-t.
\]
If \(B_N\) does not span \(N\), its original representatives lie in
a flat of \(M|B\) of rank at most \(q-1\). Since \(U_j(t)\) increases
with \(j\ge3\) and \(U_3(t)\ge2t\), induction and the line or point
bound give
\[
    m\le t+U_{q-1}(t)\le U_q(t).
\]
Thus we may assume that \(B_N\) spans \(N\).

Use \cref{lem:dirac} to contract a blue point \(e\), and simplify
with red priority. Let \(N_0\) be the resulting simple matroid,
and let \(B_0\) be its blue set, of size \(n_0\). Then \(N_0\)
has rank \(q-1\), satisfies (C2), and has at most \(t\) red points.
Moreover,
\begin{equation}\label{eq:lower}
    n_0\ge\frac{q-2}{q}(m-t)+1-t.
\end{equation}
Choose an original representative in \(F\) for each point of \(B_0\),
and also denote the original representative of \(e\) by \(e\).

Starting from \(N_0\), perform the following principal truncations
without simplifying, retaining the ground set and its coloring.
Interpret (C3) in the red-priority simplification of the current matroid.
If a line \(\ell\) of its blue restriction witnesses failure of (C3),
choose representatives of its points and let \(L\) be their closure
in the current unsimplified matroid. This is an ambient rank-two flat,
including all parallel labels on the line. Taking ambient closure
does not change skewness, and flats correspond under simplification.
Thus every complementary flat skew to \(L\) contains a red label.
Principally truncate \(L\); by \cref{lem:repair}, this preserves (C2).
Repeat until (C3) holds or at most one blue point remains.
The process stops by rank two, where (C2) forces every point to be red.
Let \(D\) be the number of truncations and let \(s\) be the terminal
rank. Let \(\Q\) contain all nontrivial final parallel classes,
including red classes containing original blue labels.
By \cref{lem:budget},
\begin{align*}
    s&=q-1-D\ge2,\\
    \sum_{Q\in\Q}c_Q&=D,
    \qquad c_Q=\rk_{N_0}(Q)-1\ge1.
\end{align*}

For \(Q\in\Q\), lift the points of \(B_0\cap Q\) to their chosen
representatives in \(F\), and include \(e\). Lifting through the
contraction of \(e\) adds at most one rank, and contraction of \(P\)
preserved all ranks inside \(F\). Thus these original blue points
have rank at most \(c_Q+2\), where
\[
    c_Q+2\le D+2=q+1-s\le q-1.
\]
The point \(e\) is distinct from all the chosen representatives.
If their union with \(e\) has rank at least three, induction
applied to its closure in \(M|B\) therefore gives
\[
    |B_0\cap Q|+1\le U_{c_Q+2}(t).
\]
If that rank is at most two, the line or point bound instead gives
\(|B_0\cap Q|\le t-1\le H_t(c_Q)\).
In all cases,
\begin{equation}\label{eq:class}
    |B_0\cap Q|\le U_{c_Q+2}(t)-1=H_t(c_Q).
\end{equation}

Let \(S_0\) consist of the elements of \(B_0\) whose final parallel
classes are singletons, and let \(B_{\mathrm{term}}\) be the blue
point set of the terminal red-priority simplification.
Every element of \(S_0\) remains blue, so
\[
    n_0=|S_0|+\sum_{Q\in\Q}|B_0\cap Q|,
    \qquad |S_0|\le |B_{\mathrm{term}}|.
\]
If the terminal simplification has at least two blue points, it is
admissible, and its blue set spans its rank \(s\). By the simultaneous
ambient-rank induction and monotonicity in the red count,
\[
    |B_{\mathrm{term}}|
    \le V_s(t)=H_t(s-2)-(t-1)\le H_t(s-2).
\]
If \(s\ge3\) and at most one blue point remains, the same final
inequality follows from \(H_t(s-2)\ge1\). If \(s=2\), there are
no blue points, and \(H_t(0)=0\).

Applying these bounds and superadditivity gives
\begin{align*}
    n_0
    &\le H_t(s-2)+\sum_{Q\in\Q}H_t(c_Q)\\
    &\le H_t\left((s-2)+\sum_{Q\in\Q}c_Q\right)\\
    &=H_t(q-3)=U_{q-1}(t)-1.
\end{align*}
A surviving nontrivial blue class can contribute to both terms
of the first bound, which only increases the upper estimate.
Combining this with \cref{eq:lower} and \cref{eq:recurrence} yields
\[
    m\le
    t+\frac{q}{q-2}\bigl(U_{q-1}(t)+t-2\bigr)=U_q(t).
\]

For the ambient-rank bound, take \(F=B\) when \(\rk(M)=q\).
Then \(P=\varnothing\) and \(N=M\), so \(B_N=B\) spans \(N\)
and no blue point is lost before contracting \(e\).
Consequently, \cref{eq:lower} improves to
\[
    n_0\ge\frac{q-2}{q}|B|+1-t.
\]
The same upper bound on \(n_0\) now gives
\[
    |B|\le
    \frac{q}{q-2}\bigl(U_{q-1}(t)+t-2\bigr)
    =U_q(t)-t=V_q(t).
\]
This completes the simultaneous induction.
\end{proof}

\begin{proof}[Proof of \cref{thm:main}]
Apply the ambient-rank bound in \cref{thm:flats} with $r=k+1$ and
expand. For the coarser bound, put $A=7-2\sqrt2<6$. Then
\[
    (k+1)(Ak-6)=Ak^2+(A-6)k-6\le Ak^2,
\]
and the subtracted constant $(k+1)(3k-4)/2$ is positive for $k\ge2$.
\end{proof}

\section{A construction from cyclic Dowling geometries}
\label{sec:construction}

Let \(\mu_m=\{\zeta\in\mathbb C:\zeta^m=1\}\), and let
\(e_1,\ldots,e_r\) be the standard basis of \(\mathbb C^r\).
The cyclic Dowling geometry \(Q_r(m)\) is represented by the
projective point set
\[
    P=\{[e_i]:1\le i\le r\}
      \cup\{[e_i-\zeta e_j]:1\le i<j\le r,\ \zeta\in\mu_m\}.
\]
Here \([v]\) denotes the projective point represented by \(v\).
These geometries have rank \(r\) and \(r+m\binom r2\) points;
see Dowling~\cite{Dowling1973} and
Kroeker and Lund~\cite[Section~4]{KLFrame}.
Note that Campbell, Kroeker, and Lund~\cite[Proposition 2.1]{CKL} give a slightly better construction for $b_k(3)$; our contribution is the uniform order $k^2t$ for arbitrary $t$.

\begin{proposition}\label{prop:dowling-lower}
For all integers \(k\ge2\) and \(t\ge3\),
\[
    b_k(t)\ge
    (t-2)\left(\binom{k+1}{2}-1\right)+k-1.
\]
\end{proposition}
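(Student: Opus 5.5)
The plan is to color a cyclic Dowling geometry of rank \(r=k+1\) with \(m=t-2\), making red exactly the points of one coordinate line. Since \(t\ge3\), we have \(m\ge1\). Take \(M=Q_{k+1}(t-2)\), which has \(k+1+(t-2)\binom{k+1}2\) points. Declare red the \(t\) points
\[
    [e_1],\ [e_2],\ [e_1-\zeta e_2]\quad(\zeta\in\mu_{t-2}),
\]
which are exactly the points of the configuration in \(\operatorname{span}\{e_1,e_2\}\). Every other point is blue. The blue count is then
\[
    k+1+(t-2)\tbinom{k+1}2-t=(t-2)\Bigl(\tbinom{k+1}2-1\Bigr)+k-1,
\]
as claimed, so it remains to verify (C1)--(C3). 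Condition (C1) is immediate for \(k\ge2\), since for instance \([e_3]\) and \([e_1-e_3]\) are blue and independent.

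For (C2), I would use that the red line \(X=\cl\{e_1,e_2\}\) is a modular flat of \(Q_r(m)\). Dowling geometries are supersolvable, with the coordinate flats \(\cl\{e_1,\dots,e_j\}\) forming a maximal chain of modular flats. For a hyperplane \(H\), modularity gives
\[
    \rk(X\cap H)=\rk(X)+\rk(H)-\rk(X\cup H)\ge2+(r-1)-r=1.
\]
Hence every hyperplane contains a point of \(X\), and every such point is red. If one prefers a direct argument, one can check modularity by noting that every point outside \(X\) is, modulo \(\operatorname{span}\{e_3,\dots,e_r\}\), parallel to a point of \(X\) or zero. This is the usual ``\(X\) is a modular coatom of the chain'' argument: any flat \(Y\) with \(\rk(X\cup Y)=\rk X+\rk Y-1\) has its rank-drop witnessed by a point of \(X\). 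Alternatively, I would cite modularity from Dowling's paper.

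The main obstacle is (C3). Given a blue line \(L\), we need a rank-\((r-2)\) flat \(G\) skew to \(L\) that avoids \(X\). By skewness, avoiding \(X\) means \(\operatorname{span}G\) meets \(\operatorname{span}\{e_1,e_2\}\) in no configuration point.

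The tool is the symmetry group of \(Q_r(m)\): permutations of coordinates and scalings of coordinates by elements of \(\mu_m\). The red set is preserved by the subgroup fixing \(\{1,2\}\) setwise. The natural candidates for \(G\) are ``graphic-type'' flats. Examples are \(G_0=\cl\{e_3,\dots,e_r\}\), and flats spanned by \(e_j-\zeta_j e_{\sigma(j)}\) together with some coordinate vectors \(e_i\) with \(i\ge3\), where the associated gain graph is a forest. For such a \(G\), the span meets \(\operatorname{span}\{e_1,e_2\}\) in at most a line. One chooses the forest so that this intersection is either zero or a single direction \(e_1-\lambda e_2\) with \(\lambda\notin\mu_m\cup\{0\}\). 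This is possible, for example with \(e_1-e_3\) and \(e_3-\lambda' e_2\), after adjusting which vertices are joined.

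I would then do a case analysis on the coordinate support of \(L\). In the first case, the projection of \(\operatorname{span}L\) to coordinates \(1,2\) is bijective, and \(G_0\) works. In the second case, \(L\) has support inside \(\{3,\dots,r\}\), and \(G\) can be taken as the span of \(e_1-e_j\), \(e_2-e_{j'}\) and the remaining \(e_i\), arranged to be skew to \(L\). In the third case, the projection of \(L\) to coordinates \(1,2\) has rank one, and one mixes the two constructions. In each case one checks skewness by a determinant computation and checks red-freeness by computing the intersection with \(\operatorname{span}\{e_1,e_2\}\).

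I expect this case check, and especially the third case together with small \(r\) such as \(r=3\), to be the delicate step. If it resists, I would first try the case \(m=1\), where \(Q_r(1)\) is the graphic matroid \(M(K_{r+1})\) and the red line is a triangle. There, (C3) reduces to a purely combinatorial statement about spanning forests avoiding an edge, and that proof can then be lifted to general gains.
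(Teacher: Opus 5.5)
\textbf{Assessment.} Your construction, coloring, blue count and (C1) agree with the paper. Deriving (C2) from modularity of $X=\cl\{e_1,e_2\}$ is a legitimate alternative to the paper's argument. The paper instead checks directly that the nonzero coordinates of a functional defining a hyperplane form a single $\mu_m$-class. For modularity, cite supersolvability of Dowling geometries rather than your projection sketch. The fact that every point outside $X$ projects modulo $\operatorname{span}\{e_3,\dots,e_r\}$ to $0$, $[e_1]$ or $[e_2]$ does not by itself imply modularity. What is needed is the group structure of $\mu_m$: it makes every line of $\cl\{e_1,\dots,e_{j+1}\}$ meet $\cl\{e_1,\dots,e_j\}$, and then transitivity of modularity applies.

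\textbf{The gap is (C3).} This is the heart of the proposition, and you leave it as an unexecuted case analysis. Part of your plan also cannot work. Put $U=\operatorname{span}\{e_1,e_2\}$. By the very modularity you used for (C2), every flat $G$ satisfies $\dim(\operatorname{span}G\cap U)=\rk(G\cap X)$. Since $G\cap X$ consists of red points, a red-free $G$ must have $\operatorname{span}G\cap U=\{0\}$. So an intersection direction $e_1-\lambda e_2$ with $\lambda\notin\mu_m$ never occurs. Your example shows this concretely: $[e_3-\lambda' e_2]$ is a configuration point only when $\lambda'\in\mu_m$, and then $(e_1-e_3)+(e_3-\lambda' e_2)=e_1-\lambda' e_2$ is red. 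What you need is a rank-$(r-2)$ flat whose span is a common complement of $U$ and $W=\operatorname{span}L$.

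\textbf{The paper's idea.} Seek this complement as $\ker\pi$, where $\pi\colon\mathbb C^r\to\mathbb C^2$ sends $e_1,e_2$ to the standard basis $f_1,f_2$ and each $e_i$ with $i\ge3$ to one of $0,f_1,f_2$.
\begin{itemize}
\item Such a kernel is spanned by the blue points $e_i$, $e_i-e_1$, $e_i-e_2$, and it meets $U$ trivially. The only remaining condition is $\det(\pi u,\pi v)\ne0$ for a basis $u,v$ of $W$.
\item Start from any linear $\pi$ that is the identity on $U$ and injective on $W$. It exists because the prescribed map on $U\cap W$ is injective.
\item The determinant is affine-linear in each column $\pi(e_i)$, and an affine function on $\mathbb C^2$ vanishing at $0,f_1,f_2$ vanishes identically. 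So the columns can be replaced one at a time while keeping the determinant nonzero.
\end{itemize}
This handles every case at once, including $r=3$, with no detour through $m=1$.

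\textbf{Finishing your cases.} Your own case split can also be completed.
\begin{itemize}
\item Case 1 is correct as stated.
\item Case 2 is correct if you choose $j\ne j'$ so that the $2\times2$ minor of $W$ in coordinates $j,j'$ is nonzero.
\item For case 3, pick $w_0,w_1\in W$ with $w_0$ vanishing in coordinates $1,2$ and $((w_1)_1,(w_1)_2)\ne(0,0)$, and pick $j\ge3$ with $(w_0)_j\ne0$. Let $G$ be spanned by the $e_i$ with $i\ge3$, $i\ne j$, together with $e_j-e_1$ if $(w_1)_2\ne0$ and $e_j-e_2$ otherwise. The relevant determinant is then $(w_0)_j(w_1)_2$ or $-(w_0)_j(w_1)_1$, which is nonzero.
\end{itemize}
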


\begin{proof}
Put \(r=k+1\) and \(m=t-2\), and let \(M\) be the matroid
represented by \(P=Q_r(m)\). Color the points
\[
    R=\{[e_1],[e_2]\}
      \cup\{[e_1-\zeta e_2]:\zeta\in\mu_m\}
\]
red and all remaining points blue. Thus \(R\) consists of the
configuration points on the coordinate line spanned by \(e_1,e_2\),
and
\[
    |R|=m+2=t,\qquad
    |B|=m\left(\binom r2-1\right)+r-2.
\]
We verify admissibility. The blue points represented by
\(e_3,\ldots,e_r,e_1-e_3,e_2-e_3\) span \(\mathbb C^r\),
so (C1) holds.

For (C2), let a hyperplane of \(M\) be represented by the points of
\(P\) in the kernel of a nonzero linear functional
\(a=(a_1,\ldots,a_r)\). Put \(Z=\{i:a_i=0\}\), and partition
the remaining indices into classes under
\(i\sim j\) if \(a_i/a_j\in\mu_m\).
If there are \(c\) such classes, the configuration points in
\(\ker a\) span a space of dimension
\[
    |Z|+\sum_C(|C|-1)=r-c.
\]
Indeed, the coordinate points indexed by \(Z\) span a space of
dimension \(|Z|\). Within each nonzero class \(C\), the vectors
\(e_i-(a_i/a_j)e_j\) span a space of dimension \(|C|-1\),
and there are no configuration points joining different classes
or a zero index to a nonzero index. Since the hyperplane has
rank \(r-1\), we have \(c=1\).
If \(a_1=0\) or \(a_2=0\), the hyperplane contains a red
coordinate point. Otherwise \(a_1/a_2\in\mu_m\), so it contains
the red point \([e_1-(a_1/a_2)e_2]\).

For (C3), let \(W\) be the two-dimensional vector space representing
a line of \(M|B\), and put \(U=\langle e_1,e_2\rangle\).
Let \(f_1,f_2\) be the standard basis of \(\mathbb C^2\).
There is a linear map \(\pi:\mathbb C^r\to\mathbb C^2\) with
\(\pi(e_1)=f_1\), \(\pi(e_2)=f_2\), and \(\pi|_W\) injective:
extend the prescribed map on \(U\cap W\) to an isomorphism on
\(W\), and then extend the resulting map on \(U+W\) to
\(\mathbb C^r\).

Choose a basis \(u,v\) of \(W\). The nonzero determinant
\(\det(\pi(u),\pi(v))\) is affine-linear in each column
\(\pi(e_i)\) separately. An affine-linear function on
\(\mathbb C^2\) that vanishes at \(0,f_1,f_2\) vanishes
identically. Hence, successively for \(i=3,\ldots,r\), we can
replace \(\pi(e_i)\) by one of \(0,f_1,f_2\) while keeping this
determinant nonzero. After these replacements, \(\pi|_W\)
is still injective, and \(\ker\pi\) has the basis
\[
    g_i=
    \begin{cases}
        e_i,&\pi(e_i)=0,\\
        e_i-e_1,&\pi(e_i)=f_1,\\
        e_i-e_2,&\pi(e_i)=f_2,
    \end{cases}
    \qquad 3\le i\le r.
\]
Each \([g_i]\) is blue. Since \(\pi|_U\) is an isomorphism,
\(\ker\pi\) contains no red point. Thus
\(G=\{[x]\in P:x\in\ker\pi\}\) is a red-free flat of \(M\)
of rank \(r-2\). The injectivity of \(\pi|_W\) gives
\(W\cap\ker\pi=\{0\}\), so \(G\) is skew to the given blue
line. This proves (C3), and the formula for \(|B|\) gives the
lower bound.
\end{proof}

When \(t=4\), we have \(m=2\) and \(\mu_m=\{1,-1\}\), so
the same configuration is real-representable. Its coloring is
admissible over \(\mathbb R\), since the represented matroid is
unchanged by extending scalars to \(\mathbb C\). In this case,
\[
    b_k^{\mathbb R}(4)\ge
    2\left(\binom{k+1}{2}-1\right)+k-1=(k-1)(k+3)
    \qquad(k\ge2).
\]

\begin{corollary}\label{cor:complex-order}
For all integers \(k\ge2\) and \(t\ge3\),
\[
    \frac16 k^2t\le b_k(t)\le\frac{7-2\sqrt2}{3}k^2t.
\]
In particular, \(b_k(t)=\Theta(k^2t)\) uniformly in
\(k\ge2\) and \(t\ge3\).
\end{corollary}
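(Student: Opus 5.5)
The upper bound is exactly \cref{thm:main}, since its second line already reads \(b_k(t)\le\frac{7-2\sqrt2}{3}k^2t\) for \(k\ge2\) and \(t\ge3\). Nothing further is needed on that side. The work is to derive \(b_k(t)\ge\frac16k^2t\) from \cref{prop:dowling-lower}. I will show that, for all \(k\ge2\) and \(t\ge3\),
\[
    (t-2)\left(\binom{k+1}{2}-1\right)+k-1\ \ge\ \frac16k^2t .
\]

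I will handle the two factors separately. First, \(\binom{k+1}{2}-1=\frac{(k-1)(k+2)}{2}=\frac{k^2+k-2}{2}\ge\frac{k^2}{2}\) whenever \(k\ge2\). Second, \(t-2\ge t/3\) whenever \(t\ge3\). Multiplying gives \((t-2)\bigl(\binom{k+1}{2}-1\bigr)\ge\frac{k^2t}{6}\). The remaining term \(k-1\) is positive, so it only helps.

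The one point needing care is that \(t-2\) may be as small as \(t/3\), namely at \(t=3\). This is why the constant is \(1/6\) and not something larger. The slack in each estimate is nonnegative on the stated range, so there is no real obstacle.

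Finally, I will combine the two inequalities. Since both constants \(1/6\) and \((7-2\sqrt2)/3\) are absolute, this gives \(b_k(t)=\Theta(k^2t)\) uniformly in \(k\ge2\) and \(t\ge3\).
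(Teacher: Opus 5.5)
Your proposal is correct and is essentially the paper's proof: the upper bound is read off from \cref{thm:main}, and the lower bound follows from \cref{prop:dowling-lower} via \(t-2\ge t/3\) and \(\binom{k+1}{2}-1\ge k^2/2\). Your explicit factorization \(\binom{k+1}{2}-1=(k-1)(k+2)/2\) and your remark that the term \(k-1\) is nonnegative fill in the small details the paper leaves implicit.
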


\begin{proof}
The upper bound is \cref{thm:main}. For the lower bound, apply
\cref{prop:dowling-lower} and use
\(t-2\ge t/3\) and \(\binom{k+1}{2}-1\ge k^2/2\).
\end{proof}

\bibliographystyle{plain}
\bibliography{quadratic-ckl-bound}
\end{document}